\documentclass{article}
\usepackage{graphicx,tikz}
\usetikzlibrary{decorations.markings,decorations.pathreplacing}
\usepackage[margin=1.25in]{geometry}
\usepackage{amsfonts,amsmath,amssymb,amsthm}

\newtheorem{theorem}{Theorem}

\newtheorem{conjecture}[theorem]{Conjecture}

\theoremstyle{definition}

\newcommand{\EE}{\mathbb E}
\newcommand{\T}{\overrightarrow{T}}
\newcommand{\A}{\overrightarrow{A}}

\renewcommand{\ge}{\geqslant}

\title{A short proof of the Erd\H os--S\'os Conjecture}
\author{Oliver Riordan\footnote{Mathematical Institute, University of Oxford.
\texttt{\{oliver.riordan,alexander.scott\}@maths.ox.ac.uk} \newline Alex Scott's research supported by EPSRC grant EP/X013642/1.}\;  and
Alex Scott\protect\footnotemark[1]
}

\date{16 September 2026}

\begin{document}

\maketitle

\begin{abstract}
The Erd\H os--S\'os Conjecture was recently proved by GPT-6 Astra, using a very ingenious and surprising argument.  In this note, we present a simplified version of this argument in an (arguably) more natural form.  We also determine the extremal graphs for the Erd\H os--S\'os Conjecture, and prove a related conjecture of Addario-Berry, Havet, Linhares Sales, Reed and Thomass\'e, again determining the extremal graphs.
\end{abstract}

In the 1960s, Erd\H os and S\'os made the following beautiful conjecture concerning the extremal numbers of arbitrary trees:

\begin{conjecture}\label{ESconj}(Erd\H os--S\'os Conjecture \cite{erdossos})
    Let $G$ be a graph with average degree greater than $k-2$.  Then $G$ contains a copy of every tree $T$ on $k$ vertices.
\end{conjecture}

The bound is best possible for any $T$, as shown by a vertex-disjoint union of complete graphs $K_{k-1}$. Over the last 60 years, the Erd\H os--S\'os Conjecture attracted considerable attention. Special cases were proved by, for example, Brandt and Dobson~\cite{BD}, McLennan~\cite{McLennan}, and Pokrovskiy~\cite{pokrovskiy}; for a more detailed history see the survey by Stein~\cite{Stein}. The conjecture was proved in a certain asymptotic sense by Rohzoň~\cite{rohzon}, and independently by Besomi, Pavez-Sign\'e and Stein~\cite{BP-SS}. 

In a startling development, the Erd\H os--S\'os conjecture was recently proved in full by GPT-6 Astra \cite{chatgpt}, with a short and ingenious argument; there is also a Lean verification \cite{lean}.  An exposition of the proof was posted by Thomas Bloom \cite{bloom}.

The aim of this note is to present a simplified and hopefully more intuitive version of GPT-6 Astra's proof.  In addition, we determine the extremal graphs, that is the graphs that have average degree $|T|-2$ but do not contain a copy of $T$.

\begin{theorem}\label{th:extremal}
    Let $T$ be a tree with $k$ vertices, and let $G$ be a graph with average degree $k-2$.  If $G$ does not contain a copy of $T$ then:
\begin{itemize}
    \item If $T=K_{1,k-1}$, then $G$ is $(k-2)$-regular.
    \item If $T \ne K_{1,k-1}$, then $G$ is a vertex-disjoint union of copies of $K_{k-1}$. 
\end{itemize}
\end{theorem}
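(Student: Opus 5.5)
We will derive Theorem~\ref{th:extremal} from Conjecture~\ref{ESconj}, which the paper proves. We use it as a black box in the strict form: average degree $>k-2$ forces a copy of $T$. We then run an equality analysis.

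\textbf{The star case.} This is immediate. If $G$ has no copy of $K_{1,k-1}$, then every vertex has degree at most $k-2$. Since the average degree is exactly $k-2$, every degree equals $k-2$, so $G$ is $(k-2)$-regular.

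\textbf{Reduction to one large component (for $T\ne K_{1,k-1}$).} The average degree of $G$ is a weighted average of the average degrees of its components. If some component had average degree $>k-2$, it would contain $T$ by Conjecture~\ref{ESconj}. Hence every component $H$ has average degree exactly $k-2$. A component with at most $k-1$ vertices has average degree at most $k-2$, with equality only if it is $K_{k-1}$. So it suffices to rule out a connected graph $H$ with $|H|\ge k$, average degree exactly $k-2$, and no copy of $T$.

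\textbf{A minimum degree condition.} Deleting a vertex $v$ of degree $d$ from $H$ leaves a graph with $n-1$ vertices and average degree $((k-2)n-2d)/(n-1)$. This exceeds $k-2$ exactly when $d<(k-2)/2$. So $\delta(H)\ge (k-2)/2$. More generally, deleting a set $S$ with $e(S)+e(S,V\setminus S)<(k-2)|S|/2$ produces a copy of $T$. Hence every vertex set $S$ spans or sends out at least $(k-2)|S|/2$ edges.

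\textbf{Embedding $T$ in $H$.} Write $T=T'+\ell$, where $\ell$ is a leaf attached to $u$. Choose $\ell$ so that $u$ is not the centre of a star; this is possible because $T$ is not a star. The tree $T'$ has $k-1$ vertices and $H$ has average degree $k-2>k-3$, so $H$ contains many copies of $T'$. We then try to extend a copy of $T'$ by finding a neighbour of the image of $u$ outside the copy. If this fails for every copy, then every image of $u$ has all its neighbours inside the copy. Combined with connectivity, $|H|\ge k$, and the freedom to re-root $T'$ at different vertices, this should force the neighbourhood of the copy to be closed. That would make the copy a $K_{k-1}$-like component, contradicting $|H|\ge k$ and connectivity.

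\textbf{Expected main obstacle.} We expect the main obstacle to be the last step. The black-box form of Conjecture~\ref{ESconj} alone probably does not give enough rigidity, since the minimum degree bound $(k-2)/2$ is too weak for greedy embedding. If so, we would instead rerun the paper's proof of Conjecture~\ref{ESconj} on $H$ and track where its inequalities must hold with equality. Presumably this is a counting or averaging argument over embeddings. We expect equality to force every vertex of $H$ to have all its neighbours within a fixed $(k-1)$-set, that is, to force $H=K_{k-1}$. This contradicts $|H|\ge k$. The place where $T\ne K_{1,k-1}$ enters should be exactly the step at which equality is attainable for stars by any $(k-2)$-regular graph, but not for other trees.
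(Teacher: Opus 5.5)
\textbf{There is a genuine gap: the non-star case is not proved.} Your star case is fine, and so is your reduction to a connected $H$ with $|H|\ge k$ and average degree exactly $k-2$. But the paragraph on embedding $T$ in $H$ is only a heuristic (``this should force\dots''), and it cannot be completed with what you have. The black-box Conjecture~\ref{ESconj} says nothing about graphs of average degree exactly $k-2$. The one consequence you extract, $\delta(H)\ge (k-2)/2$, is far too weak: the image of $u$ in a copy of $T'$ may have fewer than $k-2$ neighbours, so failure of the extension step gives neither a contradiction nor a ``closed'' neighbourhood. Your fallback of tracking equality does not work as stated either. Applying Theorem~\ref{th1} to $T$ minus a leaf, as in the deduction of the conjecture, gives the lower bound $d-|T'|+1=0$ when $d=k-2$, which carries no information. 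Chasing equality through the induction of Theorem~\ref{th1} would also be unwieldy, since both cases have slack.

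\textbf{The missing idea} is to apply Theorem~\ref{th1} once, to a smaller rooted tree chosen so that the lower bound is positive and matches a pointwise upper bound. Let $x$ be a leaf of the tree obtained from $T$ by deleting all its leaves; this tree has at least two vertices because $T$ is not a star. Let $y$ be the neighbour of $x$ in that tree, let $S$ be the $s\ge1$ leaves of $T$ at $x$, and let $A=T-x-S$, rooted at $y$, so $|A|=k-s-1$.
\begin{itemize}
\item By Theorem~\ref{th1} and the swapping argument of Case~1, the expected number of $A$-good edges is at least $(k-2)-|A|+1=s$.
\item Conversely, $s+1$ $A$-good edges in any ordering give a copy of $T$: the earliest one carries the copy of $A$, and the other $s$ end at later vertices, which play the role of $S$.
\item Hence every ordering has exactly $s$ $A$-good edges. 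Taking an ordering that starts with $v$ followed by $N(v)$ gives $\deg v\ge |A|+s-1=k-2$, so $G$ is $(k-2)$-regular.
\end{itemize}
Only with regularity does a greedy embedding go through. If a component is not complete, it contains $p,q,r$ with $pq,qr\in E(G)$ and $pr\notin E(G)$. Map $v_2,v_3,v_4$ of a longest path $v_1v_2v_3v_4\cdots$ of $T$ to $p,q,r$. Embed the remaining vertices greedily into unused neighbours, leaving the leaf $v_1$ until last. This last step succeeds because $r\notin N(p)$ leaves a free neighbour of $p$. Your reduction to components is harmless but not needed.
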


We further show that with very minor modifications, the proof of the Erd\H os--S\'os Conjecture can be used to prove a related conjecture for digraphs.  An orientation of a graph is {\em antidirected} if it does not contain an oriented path with two edges; in other words, every non-isolated vertex is either a source or a sink.  We prove the following conjecture of Addario-Berry, Havet, Linhares Sales, Reed and Thomass\'e.\footnote{As we were writing this, we discovered that the same result was proved independently and a few days earlier by Santos, Stein and Williams \cite{maya} (see their Section 6), also by modifying the ChatGPT proof.}

\begin{conjecture}\label{AHLRTconj}(Addario-Berry, Havet, Linhares Sales, Reed and Thomass\'e \cite{directed})
    Let $D$ be a digraph with average outdegree greater than $k-2$.  Then $D$ contains a copy of every antidirected tree with $k$ vertices.
\end{conjecture}

Note that this is a generalization of the Erd\H os--S\'os Conjecture, as given a graph $G$ we can consider the digraph formed by replacing every edge $xy$ with a pair of directed edges, one in each direction. 
The antidirected condition is necessary: we can avoid any tree that is not antidirected by taking $D$ to be a complete bipartite graph, with two classes of size $k-1$ and all edges directed from the first to the second class.
For recent progress on this conjecture, see, for example,  \cite{SZ-G24,ST-N25,maya}.

Finally, we determine the extremal digraphs.

\begin{theorem}\label{th:extremal2}
    Let $\T$ be an antidirected tree with $k$ vertices, and let $D$ be a digraph with average outdegree $k-2$.  If $D$ does not contain a copy of $\T$ then:
\begin{itemize}
    \item If $\T$ is a star and the centre of the star is a source, then any $(k-2)$-out-regular digraph is extremal; if the centre is a sink then any $(k-2)$-in-regular digraph is extremal.
    \item If $\T$ is not a star, then $D$ is a vertex-disjoint union of copies of complete digraphs on $k-1$ vertices, with every pair of vertices joined by edges in both directions. 
\end{itemize}
\end{theorem}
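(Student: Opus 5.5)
We will deduce Theorem~\ref{th:extremal2} from the proof of Conjecture~\ref{AHLRTconj}. The idea is to run that argument in the boundary case ``average outdegree exactly $k-2$'' and track where equality must hold. The approach has three steps.

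The first step is the star case, which is handled directly. If $\T$ is an out-star with a source centre, then $D$ contains $\T$ if and only if some vertex has outdegree at least $k-1$. With average outdegree exactly $k-2$, avoiding $\T$ therefore forces every outdegree to equal $k-2$. Conversely, any $(k-2)$-out-regular digraph avoids $\T$, so these digraphs are exactly the extremal ones. The in-star case is symmetric, using indegrees. This uses the fact that the average indegree also equals $k-2$, since both averages equal $e(D)/|D|$.

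The second step is a reduction for the non-star case, which we would carry out by induction on $|D|$. Let $D$ have average outdegree $k-2$ and contain no copy of $\T$. If some proper nonempty subdigraph (induced, say) has average outdegree greater than $k-2$, then Conjecture~\ref{AHLRTconj}, which is proved earlier, gives a copy of $\T$, a contradiction. So every induced subdigraph has average outdegree at most $k-2$. Apply this to $D[S]$ and $D[V\setminus S]$ for a proper nonempty $S$. Counting edges gives
\[ e(D)=e(S)+e(V\setminus S)+e(S,V\setminus S)\le (k-2)|V|+e(S,V\setminus S). \]
This is not immediately tight. Still, if $D$ is disconnected as an underlying graph, each component has average outdegree at most $k-2$, so each has average exactly $k-2$, and we finish by induction on components. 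We may therefore assume $D$ is weakly connected. We also get that deleting any vertex $v$ leaves average outdegree at most $k-2$. This says $d^+(v)+d^-(v)-\mathbf 1[\text{loop}]\ge k-2$, so every vertex has total degree at least $k-2$. In other words, $D$ is \emph{critical}.

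The third step, which is the main obstacle, is to re-enter the proof of Conjecture~\ref{AHLRTconj} for a critical, weakly connected $D$. I expect that proof has the following structure. One picks a leaf-removal $\T'=\T-\ell$ with parent $p$, embeds $\T'$ via some extremal or weighting choice (the ``GPT-6 Astra'' argument presumably averages over a probability distribution of embeddings, or over a potential function), and derives a strict inequality whenever the average exceeds $k-2$. In the equality case, every inequality in that chain must be tight. I would extract the following consequence from tightness: for every embedded copy of $\T'$, the image of $p$ has all its relevant out- or in-neighbours inside the image, and its outdegree is exactly $k-2$. Using that $\T$ is not a star, we can choose two different leaves, or leaves attached to different parents, and swap roles. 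From this we conclude that every vertex lies in a set of $k-1$ vertices that are pairwise joined in both directions, with no edges leaving that set. Weak connectivity then forces $D\cong \overleftrightarrow{K}_{k-1}$.

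The delicate point is making the tightness analysis genuinely force bidirected edges rather than merely degree conditions. I would first try to show that tightness gives $d^+(v)=d^-(v)=k-2$ for all $v$. I would then embed $\T$ with a non-star structure, say a path of length $3$ inside it, to show that the out-neighbourhood of $v$ equals its in-neighbourhood and forms a clique. Any missing arc would let us re-route a leaf and complete a copy of $\T$.
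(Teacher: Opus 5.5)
Your star case is correct. The reductions in your second step (passing to weakly connected $D$, and deriving $d^+(v)+d^-(v)\ge k-2$) are also valid, but they are not needed and do not get you near the real target, which is $d^+(v)=d^-(v)=k-2$.

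\textbf{The gap is your third step.} It carries the whole non-star case, yet it only describes what you hope ``tightness'' would give, based on a guess about the structure of a proof you have not reconstructed. The one concrete consequence you state is that for every copy of $\T'$ the image of $p$ has all its relevant neighbours inside the image. This holds trivially for any $D$ avoiding $\T$, and it gives only an upper bound of $k-2$ on one kind of degree, and only at vertices that happen to be images of $p$. It gives no lower bound, says nothing about other vertices, and says nothing about the opposite direction. Nor can ``every inequality in the chain is tight'' be used as stated. The proof of Theorem~\ref{th2} is an inductive bound on an expectation, losing ``at most one'' edge per ordering in Case 1 and using an inclusion--exclusion bound in Case 2. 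You give no way to turn equality in expectation into pointwise structure on $D$.

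\textbf{The missing idea is a pointwise upper bound that matches the expectation.}
\begin{itemize}
\item In the paper, you delete a leaf $x$ of the pruned tree (the leaves of $\T$ removed) together with its $s\ge 1$ pendant leaves. This leaves $\A$, rooted at the neighbour $y$ of $x$, with $|\A|=k-s-1$.
\item Theorem~\ref{th2} gives at least $(k-2)-|\A|+1=s$ $\A$-good edges in expectation. Avoiding $\T$ forces at most $s$ $\A$-good edges in \emph{every} ordering, so there are exactly $s$ in every ordering.
\item Orderings that start with $v$ followed by $\Gamma^+(v)$ then give $d^+(v)\ge k-2$ for all $v$, i.e.\ out-regularity. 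They also give the structural fact that any $|\A|$ out-neighbours of $v$ carry a copy of $\A$ rooted at any one of them.
\item In-regularity does \emph{not} follow by symmetry or from the average. The paper either finds a second leaf of the pruned tree that is a sink, or uses the structural fact to prove the common-out-neighbour and bidirected-edge lemmas. It then rules out a vertex of in-degree at least $k-1$ with a complete multipartite argument.
\item Only once bi-regularity is established does the greedy embedding finish the proof. It runs along a path $v_1v_2v_3v_4$ of $\T$, uses an arc missing from some in-neighbourhood, and embeds $v_1$ last. This yields bicomplete in-neighbourhoods and hence bicomplete components.
\end{itemize}
Your final paragraph roughly gestures at that last step, but it depends on the regularity you have not proved.
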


We give the (very short) proofs of Conjecture \ref{ESconj} and Conjecture \ref{AHLRTconj} in Sections \ref{sec:graphs} and \ref{sec:digraphs}.  Theorems \ref{th:extremal} and \ref{th:extremal2} are proved in Sections \ref{sec:extremalgraphs} and \ref{sec:extremaldigraphs} respectively.

 \section{Graphs}\label{sec:graphs}

Let $T$ be a rooted tree.  Given a graph $G$ with an ordered vertex set $v_1<\dots<v_n$, we say that an edge $v_1v_i$ is {\em $T$-jumping} if $G$ contains a copy of $T$ rooted at $v_1$, with all its vertices in $\{v_1,\dots,v_{i-1}\}$.  

We will prove the following theorem. 

\begin{theorem}\label{th1}
    Let $G$ be a graph with average degree $d$, and let $T$ be a rooted tree.  Then the expected number of $T$-jumping edges in a uniformly random ordering of the vertices of $G$ is at least $d-|T|+1$.
\end{theorem}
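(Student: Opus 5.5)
We argue by induction on $|T|$, comparing $T$ with a smaller rooted tree. It helps to describe $T$-jumping edges through a stopping time. For a uniformly random ordering $v_1<\dots<v_n$, let $\tau_T$ be the least index $i$ such that $\{v_1,\dots,v_i\}$ contains a copy of $T$ rooted at $v_1$, with $\tau_T=\infty$ if there is none. An edge $v_1v_i$ is then $T$-jumping exactly when $i>\tau_T$. So the number of $T$-jumping edges is $\deg(v_1)$ minus the number $N_T$ of neighbours of $v_1$ among $v_2,\dots,v_{\min(\tau_T,n)}$. Since $v_1$ is a uniform vertex, $\EE\deg(v_1)=d$, and Theorem~\ref{th1} is equivalent to
\[
\EE N_T\le |T|-1 .
\]
The base case $|T|=1$ is immediate: $\tau_T=1$, so $N_T=0$ and every edge at $v_1$ is jumping.

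For the inductive step, let $T'=T-\ell$, where $\ell$ is a leaf of $T$ other than the root, with parent $p$, and $T'$ keeps the same root. (If $T$ is a single edge, the root is a leaf and we instead delete the other vertex.) Every copy of $T$ contains a copy of $T'$, so $\tau_{T'}\le\tau_T$. Hence $N_T=N_{T'}+X$, where $X$ is the number of neighbours of $v_1$ in positions $\tau_{T'}<i\le\tau_T$. By induction it suffices to show $\EE X\le 1$.

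The whole difficulty is this bound $\EE X\le1$, and it is where the random ordering must be used in an essential, global way, presumably through a switching argument. Here is what I would try first. Suppose $u=v_i$ is a neighbour of $v_1$ counted in $X$. Then at time $i-1$ a copy of $T'$ rooted at $v_1$ already exists, yet it cannot be extended to $T$ by attaching a leaf to the image of $p$ among the first $i-1$ vertices. In particular, the image of $p$ has all its earlier neighbours used by that copy. Consider the ordering $\sigma'$ obtained by swapping the positions of $v_1$ and $u$. The swap is measure-preserving, and it turns a ``bad'' neighbour of $v_1$ into the first vertex $u$, which has $v_1$ as a neighbour at position $i$. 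I would then aim to show that these bad pairs inject into a set of orderings carrying total weight at most $1$ per ordering. One way to do this is to show that, in each ordering, at most one of the swapped images can be bad. Equivalently, the expected number of ``blocked extension'' events per ordering, summed over the positions strictly between $\tau_{T'}$ and $\tau_T$, is at most $1$.

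If the direct injection fails, my fallback is to strengthen the induction hypothesis. Instead of tracking only $\EE N_T$, I would track the expected number of neighbours of $v_1$ preceding the first time a copy of $T$ rooted at $v_1$ appears that avoids a prescribed small set. That makes the leaf-adding step local, so the swap argument only has to control one extension at a time. In either version, the key step I expect to be hardest is designing the swap or charging scheme so that each ordering is charged at most once. This is precisely the ``ingenious'' part of the argument.
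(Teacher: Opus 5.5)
Your reformulation is correct: an edge $v_1v_i$ is $T$-jumping exactly when $i>\tau_T$, so the theorem is equivalent to $\EE N_T\le |T|-1$, and the base case is trivial. The gap is the inductive step. You reduce everything to $\EE X\le 1$ for $T'=T-\ell$, with $\ell$ a non-root leaf and the root kept, and you leave this unproved. In fact it is \emph{false}, so no switching or charging scheme can deliver it.

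Here is a counterexample. Take $T=K_{1,3}$ with centre $a$, rooted at a leaf $r$, with other leaves $b,c$. For either choice of $\ell$, $T'$ is the path $r\,a\,b$ rooted at an end. Let $G$ be $K_{D+1}$ with every edge subdivided. Then a $2/(D+2)$ fraction of the vertices are branch vertices of degree $D$, and the rest have degree $2$.
\begin{itemize}
\item If $v_1$ is a branch vertex, no copy of $T$ rooted at $v_1$ exists at all, since the image of $a$ would need degree $3$. So $N_T=D$.
\item A copy of $T'$ appears as soon as some neighbour $u$ of $v_1$ and the other endpoint of $u$'s edge have both appeared. These are $D$ disjoint pairs, so by a birthday-type estimate this happens after $O(\sqrt D)$ of the $D$ neighbours in expectation.
\end{itemize}
Hence $\EE X\ge \frac{2}{D+2}\bigl(D-O(\sqrt D)\bigr)\to 2$. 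The theorem is not contradicted: here $\EE N_{T'}\to 1$ while $|T'|-1=2$, and $\EE N_T\to 3=|T|-1$. But it shows the slack in the bound for $T'$ can be large, and one added leaf can cost almost $2$. So the statement for $T'$ alone cannot pay for the extra leaf. Your fallback (copies avoiding a prescribed set) is too vague to assess and does not address this.

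The paper never adds a non-root leaf; it uses two other reductions.
\begin{itemize}
\item \emph{Root of degree $1$.} It deletes the root $r$ and re-roots at its neighbour, giving $A$. Swapping $v_1$ with a neighbour turns $A$-jumping edges into $A$-good ones, i.e.\ edges with a copy of $A$ rooted at the far endpoint inside $\{v_2,\dots,v_i\}$. If two edges are $A$-good, the later one is $T$-jumping, so at most one edge is lost.
\item \emph{Root of degree at least $2$.} It splits $T=A\cup B$ at the root. It uses $|L_\pi\cap R_\pi|\ge |L_\pi|+|R_\pi|-d_G(v_1)$, so the loss is paid by $\EE d_G(v_1)=d$ rather than by one unit per vertex. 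A measure-preserving bijection on orderings then converts edges jumping in both directions into $T$-jumping edges: it reverses the block after the right-most copy of $B$ and inserts it right after $v_1$.
\end{itemize}
These two ideas are what your proposal is missing.
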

 To deduce the Conjecture~\ref{ESconj} for an arbitrary tree $T$, apply Theorem~\ref{th1} to a tree $T'$ obtained by removing a leaf from $T$, and rooting the resulting tree at the neighbour of the removed leaf. If $G$ contains no copy of $T$, then however its vertices are ordered, there are no $T'$-jumping edges, so the average degree is at most $|T'|-1=|T|-2$.

\begin{proof}[Proof of Theorem~\ref{th1}]
    We proceed by induction on $|T|$.  For $|T|=1$ the statement is immediate: every edge incident with $v_1$ is $T$-jumping, and the expected number of such edges is $d$.

    Now suppose that $T$ is a tree with root $r$, and we have proved the result for smaller values of $|T|$.  There are two cases, depending on the degree of $r$.

    \medskip

    \noindent{\em Case 1: $r$ has degree 1.}  In this case let $s$ be the neighbour of $r$ in $T$, and let $A$ be the tree obtained from $T$ by deleting $r$ and declaring $s$ to be the new root.
    By induction, the expected number of $A$-jumping edges is at least $d-|A|+1=d-|T|+2$.  

    Call an edge of $G$ {\em $A$-good} if it joins $v_1$ to some vertex $v_i$, and there is a copy of $A$ rooted at $v_i$ with all its vertices contained in $\{v_2,\dots,v_i\}$. A given edge $xy$ is $A$-jumping in an ordering $\pi$ if and only if it is $A$-good in the ordering $\pi'$ obtained by swapping the positions of $x$ and $y$. It follows that the probabilities (in a random ordering) that $e$ is $A$-good and that $e$ is $A$-jumping are equal. Summing over $e$, the expected number of $A$-good edges is thus at least  $d-|T|+2$.

For a fixed ordering, if $v_i<v_j$ and the edges $v_1v_i$ and $v_1v_j$ are both $A$-good then the edge $v_1v_j$ is $T$-jumping, since $v_1v_i$ and the copy of $A$ guaranteed by the goodness of $v_1v_i$ give a copy of $T$ rooted at $v_1$ (see Fig \ref{f1}).  Thus, in every ordering, all but at most one $A$-good edge is $T$-jumping.  It follows that the expected number of $T$-jumping edges in a random ordering is at least $d-|T|+1$, as required.

     \begin{figure}[ht]
\centering
\begin{tikzpicture}[
  line cap=round,
  line join=round,
  every path/.style={draw=black, line width=0.9pt}
]
  % Left-hand picture
  \coordinate (Lroot) at (0,0);
  \coordinate (Linner) at (2.15,0.05);
  \coordinate (Louter) at (3.05,0.05);
  \coordinate (Ljumping) at (4.20,0.10);

  % Curved triangle A
  \draw (Lroot)
    .. controls (1.05,1.15) and (2.55,1.30) .. (Louter);
  \draw (Lroot)
    .. controls (0.72,0.55) and (1.82,0.65) .. (Linner);
  \draw (Linner) -- (Louter);
  \node at (2.55,0.35) {$A$};

  % Curved jumping edge
  \draw (Lroot)
    .. controls (1.10,1.55) and (3.50,1.60) .. (Ljumping);
  \node at (2.20,1.47) {$e$};

  \fill (Lroot) circle (1.7pt);
  \fill (Ljumping) circle (1.7pt);

  % Central arrow
  \draw[->,line width=1pt] (5.05,0.52) -- (6.05,0.52);

  % Right-hand picture
  \begin{scope}[xshift=6.90cm]
    \coordinate (Rjumping) at (0,0.10);
    \coordinate (Router) at (1.15,0.05);
    \coordinate (Rinner) at (2.05,0.05);
    \coordinate (Rroot) at (4.20,0);

    % Curved jumping edge
    \draw (Rjumping)
      .. controls (0.70,1.60) and (3.10,1.55) .. (Rroot);
    \node at (2.00,1.47) {$e$};

    % Curved triangle A
    \draw (Rroot)
      .. controls (3.15,1.15) and (1.65,1.30) .. (Router);
    \draw (Rroot)
      .. controls (3.48,0.55) and (2.38,0.65) .. (Rinner);
    \draw (Router) -- (Rinner);
    \node at (1.65,0.35) {$A$};

    \fill (Rjumping) circle (1.7pt);
    \fill (Rroot) circle (1.7pt);
  \end{scope}
\end{tikzpicture}
\caption{\label{f1} A rooted copy of $A$ with a jumping edge becomes a rooted copy of $T$ when the ends of $e$ are exchanged}
\end{figure}

    \medskip

    \noindent{\em Case 2: $r$ has degree at least 2.}  We can divide $T$ into two trees $A$ and $B$, each with at least two vertices, overlapping only at the root $r$.
Given an ordering $\pi$ of the vertices of $G$, call an edge of $G$ {\em $B^*$-jumping} if it is $B$-jumping in the ordering obtained from $\pi$ by reversing the order of $v_2,\dots,v_n$ (we can think of this as looking to the left of $v_1$ in the cyclic ordering).  Let $L_\pi$ be the set of $A$-jumping edges in $\pi$,  let $R_\pi$ be the set of $B^*$-jumping edges, and let $I_\pi=L_\pi\cap R_\pi$ be the set of edges that are jumping in both directions.  Since $L_\pi$ and $R_\pi$ are subsets of a set of size $d_G(v_1)$, we have
$|I_\pi|\ge |L_\pi|+|R_\pi|-d_G(v_1)$, so for a uniformly random permutation $\pi$ it follows by induction that 
\begin{align*}
\EE|I_\pi|
&\ge \EE|L_\pi|+\EE|R_\pi|-\EE d_G(v_1)\\
&\ge (d-|A|+1)+(d-|B|+1)-d\\
&=d-|T|+1.
\end{align*}

To convert this into a bound on the number of $T$-jumping edges, we apply an involution to the set of orderings.  Given an ordering $\pi: v_1<\cdots<v_n$, let $i$ be maximal such that $G$ contains a copy of $B$ rooted at $v_1$ and with all other vertices in $\{v_i,\dots,v_n\}$; if no such $i$ exists then set $i=2$.  Let $\pi'$ be the ordering obtained from $\pi$ by concatenating $v_1$ and $v_n,v_{n-1},\dots,v_i$ and $v_2,\dots,v_{i-1}$ (thus $v_i,\dots,v_n$ reverse order and are inserted between $v_1$ and $v_2$).  This is a bijection: for the inverse map, let $i$ be minimal such that $G$ contains a copy of $B$ rooted at $v_1$ and with all other vertices in $\{v_2,\dots,v_i\}$, or take $i=n$ if there is no such $i$; we then reverse the order of $\{v_2,\dots,v_i\}$ and move them to the end.  Thus $\pi'$ is also uniformly random.

Finally, we note that if $e$ belongs to $I_\pi$, then $e$ is a $T$-jumping edge in $\pi'$. Indeed, if $e\in I_\pi$ joins $v_1$ to $w$, then in $\pi$ there are copies $A_0$, $B_0$ of $A$, $B$ rooted at $v_1$ such that the rest of $A_0$ lies to the left of $w$ and the rest of $B_0$ lies to the right. Choosing the `right-most' such copy $B_0$, in $\pi'$ all vertices of $A_0\cup B_0$ lie to the left of $w$ (see Fig \ref{f2}).  Thus the expected number of $T$-jumping edges in a uniformly random ordering of the vertices of $G$ is at least $d-|T|+1$.
\end{proof}

\begin{figure}[ht]
\centering
\begin{tikzpicture}[
  line cap=round,
  line join=round,
  every path/.style={draw=black, line width=0.9pt}
]
  % Left-hand picture
  \begin{scope}
    \coordinate (p) at (0,0);

    % Curved triangle B^*: base width 0.90
    \draw (p)
      .. controls (1.15,1.80) and (3.85,1.95) .. (4.60,0.05);
    \draw (p)
      .. controls (1.05,1.45) and (3.15,1.55) .. (3.70,0.05);

    \draw (3.70,0.05) -- (4.60,0.05);
    \draw[<-] (4.02,0.05) -- (4.28,0.05);
    \node at (4,0.3) {$B$};

    % jumping edge between A and B^*
    \draw (p)
      .. controls (0.95,1.10) and (2.50,1.30) .. (2.85,0.05);
    \fill (2.85,0.05) circle (1.7pt);

    % Curved triangle A: base width 0.60
    \draw (p)
      .. controls (0.75,0.65) and (1.80,0.82) .. (2.25,0.05);
    \draw (p)
      .. controls (0.60,0.30) and (1.35,0.30) .. (1.65,0.05);

    \draw (1.65,0.05) -- (2.25,0.05);
    \node at (1.85,0.22) {$A$};

    \fill (p) circle (1.7pt);
  \end{scope}

  % Central arrow
  \draw[->,line width=1pt] (5.30,0.60) -- (6.30,0.60);

  % Right-hand picture
  \begin{scope}[xshift=7.05cm]
    \coordinate (q) at (0,0);

    % jumping outer edge
    \draw (q)
      .. controls (1.15,1.80) and (3.85,1.95) .. (4.60,0.05);
    \fill (4.60,0.05) circle (1.7pt);

    % Curved triangle A: base width 0.60
    \draw (q)
      .. controls (1.05,1.45) and (3.15,1.55) .. (3.70,0.05);
    \draw (q)
      .. controls (0.98,1.15) and (2.70,1.35) .. (3.10,0.05);

    \draw (3.10,0.05) -- (3.70,0.05);
    \node at (3.35,0.28) {$A$};

    % Curved triangle B: base width 0.90
    \draw (q)
      .. controls (0.80,0.75) and (2.05,0.90) .. (2.55,0.05);
    \draw (q)
      .. controls (0.60,0.30) and (1.35,0.30) .. (1.65,0.05);

    \draw (1.65,0.05) -- (2.55,0.05);
    \draw[->] (1.97,0.05) -- (2.23,0.05);
    \node at (1.95,0.3) {$B$};

    \fill (q) circle (1.7pt);
  \end{scope}
\end{tikzpicture}
\caption{\label{f2} An element of $I_\pi$ becomes part of a copy of $T$ with a jumping edge ($B$ reverses order)}
\end{figure}

The proof above is inspired by Thomas Bloom's presentation of GPT-6 Astra's argument.  However, it is written in (arguably) more natural language, and Case 2 in the proof is simplified, allowing us to use a simple involution on permutations.

\section{Digraphs}\label{sec:digraphs}

An orientation of a graph is {\em antidirected} if it does not contain an oriented path with two edges; in other words, every non-isolated vertex is either a source or a sink.  We think of a single-vertex tree as having two antidirected orientations, one in which the vertex is a source, and one where it is a sink.
%We adjust our definitions to antidirected orientations of a tree.

Let $\T$ be a rooted tree with an antidirected orientation.  Given a digraph $D$ with an ordered vertex set $v_1<\dots<v_n$, we say that an edge $e$ joining $v_1$ and $v_i$ is {\em $\T$-jumping} if 
\begin{itemize}
    \item $G$ contains a copy of $\T$ rooted at $v_1$, with all its vertices in $\{v_1,\dots,v_{i-1}\}$; and
    \item the orientation of $e$ is such that adding $e$ to $\T$ gives an antidirected tree.
\end{itemize}
We allow the case $|\T|=1$, in which case we declare the vertex to be a source or a sink, which determines the orientation for jumping edges.

We will prove the following theorem. 

\begin{theorem}\label{th2}
    Let $D$ be a digraph with average outdegree $d$, and let $\T$ be a rooted tree with an antidirected orientation.  Then the expected number of $\T$-jumping edges in a uniformly random ordering of the vertices of $D$ is at least $d-|\T|+1$.
\end{theorem}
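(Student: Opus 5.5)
We follow the proof of Theorem~\ref{th1} essentially line by line, by induction on $|\T|$, and check that orientations are respected at each step. Antidirectedness is used throughout: once the orientation of the root is fixed (source or sink), every vertex's orientation is determined by its distance from the root. So a copy of $\T$ rooted at $v_1$ is the same as a copy of the underlying tree in which all edges are oriented consistently with that choice.

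\emph{Base case $|\T|=1$.} Suppose the root is declared a source. Then the jumping edges at $v_1$ are exactly its out-edges, whose expected number is the average outdegree $d$. If the root is a sink, the jumping edges are the in-edges, and their expected number is the average indegree, which also equals $d$.

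\emph{Case 1: the root $r$ has degree $1$.} Let $s$ be the neighbour of $r$ and set $\A=\T-r$, rooted at $s$. Then $s$ has the opposite type to $r$. By induction, the expected number of $\A$-jumping edges is at least $d-|\T|+2$.
\begin{itemize}
\item Call an edge $v_1v_i$ \emph{$\A$-good} if there is a copy of $\A$ rooted at $v_i$ inside $\{v_2,\dots,v_i\}$, and the edge is oriented so that it attaches correctly at $v_i$. This is the same as saying it is oriented correctly at $v_1$ for $r$.
\item Swapping the two ends $x,y$ of the edge gives a bijection between orderings in which the edge is $\A$-jumping (rooted at $x$) and orderings in which it is $\A$-good (relative to $y$). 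The orientation condition is the same statement in both, since the jumping condition at $s$ says exactly that $e$ joins $s$ to a vertex of $r$'s type.
\item Among the $\A$-good edges at $v_1$, every one except the leftmost is $\T$-jumping. Indeed, the leftmost good edge together with its copy of $\A$ forms a copy of $\T$ rooted at $v_1$ with $v_1$ of $r$'s type, and any later good edge has the correct orientation at $v_1$.
\end{itemize}

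\emph{Case 2: $r$ has degree at least $2$.} Split $\T$ into $\A$ and $\overrightarrow{B}$, overlapping only at $r$, with $r$ of the same type in both. The orientation requirement for a jumping edge at $v_1$ depends only on the type of $r$, so it is identical for $\A$-jumping, $\overrightarrow{B}^*$-jumping and $\T$-jumping edges.
\begin{itemize}
\item The edges with the required orientation at $v_1$ number at most $d^+(v_1)$ or $d^-(v_1)$, according to the type of $r$, and each of these has expectation $d$. Hence $\EE|I_\pi|\ge (d-|\A|+1)+(d-|\overrightarrow{B}|+1)-d$.
\item The involution is the same as before: find the maximal $i$ for which a copy of $\overrightarrow{B}$ rooted at $v_1$ lies in $\{v_i,\dots,v_n\}$, then reverse that block and move it in front of $v_2$. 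It is still a bijection.
\item An edge of $I_\pi$ becomes $\T$-jumping in $\pi'$, because the two copies $A_0,B_0$ glue at $v_1$ into a copy of $\T$ with the correct orientations.
\end{itemize}

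The only real point requiring care is the orientation bookkeeping in Case 1's swap and in the base case's use of indegrees. Once the type convention (the type of a vertex alternates along edges) is fixed, both are routine.
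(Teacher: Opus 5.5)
Your proposal is correct and is essentially the paper's own argument: the paper proves Theorem~\ref{th2} by rerunning the induction of Theorem~\ref{th1} unchanged. It uses the out-degree (source root) or in-degree (sink root) in the base case and in place of $d_G(v_1)$ in Case~2, and notes that the trees built in both cases stay antidirected; your version just spells out the orientation bookkeeping (the swap in Case~1 and the shared orientation requirement at $v_1$ in Case~2) more explicitly than the paper does.
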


The proof is essentially the same as before: for the base case, if $|\T|=1$ and its vertex is declared to be a source then a $\T$-jumping edge in an ordering is just an outedge from the first vertex, and the expected number of these is the average outdegree $d$; similarly, if $|\T|$ is a sink then the expected number of jumping edges is the average indegree $d$.

The two cases now work as in the proof of Theorem \ref{th1} (noting that in Case 2, the quantity $d_G(v_1)$ is replaced by the outdegree or indegree of $v_1$ as appropriate): the tree constructed in both cases remains antidirected. 

To deduce the Conjecture~\ref{AHLRTconj} for an arbitrary tree $\T$ with an antidirected orientation, apply Theorem~\ref{th2} to a tree $\T'$ obtained by removing a leaf from $\T$, and rooting the resulting tree at the neighbour of the removed leaf. If $D$ contains no copy of $\T$, then however its vertices are ordered, there are no $\T'$-jumping edges, so the average outdegree is at most $|\T'|-1=|\T|-2$.

\section{Extremal graphs for Erd\H os--S\'os}\label{sec:extremalgraphs}

\begin{proof}[Proof of Theorem~\ref{th:extremal}]

Suppose that $e(G)=(k-2)|G|/2$ and $T$ is a tree with $k$ vertices.  If $G$ does not contain a copy of $T$ then there are two cases:
\begin{itemize}
    \item If $T=K_{1,k-1}$ then $G$ has no vertex with degree more than $k-2$.  It follows that $G$ is $(k-2)$-regular; and any $(k-2)$-regular graph is extremal.
    \item If $T$ is not a star, then any vertex-disjoint union of copies of $K_{k-1}$ is extremal.  We will show that every extremal graph has this form. 
\end{itemize}

The first bullet is clear; all that remains is to prove that if $T$ is not a star, then any vertex-disjoint union of copies of $K_{k-1}$ is extremal.

Let $T'$ be the tree obtained by deleting the leaves of $T$: since $T$ is not a star, $T'$ must have at least two vertices.  Let $x$ be a leaf of $T'$, and let $y$ be the neighbour of $x$ in $T'$.  Let $S$ be the set of leaves of $T$ adjacent to $x$, and let $A$ be the rooted tree obtained from $T$ by deleting $x$ and all leaves of $T$ adjacent to it, and declaring $y$ to be the root (see Fig \ref{f3}). By construction we have $s:=|S|\ge1$ and $|A|=k-s-1\ge2$ (the latter since $y$ and at least one leaf of $T$ belong to $A$).  

\begin{figure}[ht]
\centering
\begin{tikzpicture}[
  line cap=round,
  line join=round,
  every path/.style={draw=black, line width=0.9pt}
]
  % The set S
  \coordinate (s1) at (0,0.65);
  \coordinate (s2) at (0,0);
  \coordinate (s3) at (0,-0.65);
  \coordinate (x)  at (2.15,0);
  \coordinate (y)  at (4.20,0);

  % Edges from S to x
  \draw (s1) -- (x);
  \draw (s2) -- (x);
  \draw (s3) -- (x);

  % Edge xy
  \draw (x) -- (y);

  % Copy of A rooted at y
  \coordinate (a1) at (6.10,1.05);
  \coordinate (a2) at (6.10,-1.05);

  \draw (y) -- (a1);
  \draw (y) -- (a2);
  \draw (a1) -- (a2);

  % Vertices
  \fill (s1) circle (1.8pt);
  \fill (s2) circle (1.8pt);
  \fill (s3) circle (1.8pt);
  \fill (x)  circle (2.2pt);
  \fill (y)  circle (2.2pt);

  % Labels
  \node[above=3pt] at (x) {$x$};
  \node[above=3pt] at (y) {$y$};
  \node at (5.35,0.05) {$A$};

  % Brace and label S
  \draw[decorate,decoration={brace,amplitude=6pt}]
    (-0.25,-0.78) -- (-0.25,0.78);
  \node[left=10pt] at (-0.25,0) {$S$};
\end{tikzpicture}
\caption{\label{f3} The tree $T$}
\end{figure}

We break the proof into three steps.

\medskip
{\em (1) Every ordering has exactly $s$ edges that are $A$-good.}
\smallskip

 By Theorem \ref{th1}, the expected number of $A$-jumping edges in a random ordering of the vertices of $G$ is at least $s$.  It follows (as in the proof of Case 1) that the expected number of $A$-good edges is at least $s$.  If there is an ordering with at least $s+1$ $A$-good edges, then we obtain a copy of $T$: take any $s+1$ such edges, and add the copy of $A$ guaranteed by the earliest edge.  It follows that every ordering has exactly $s$ edges that are $A$-good.

\medskip
{\em (2) $G$ is $(k-2)$-regular. }
\smallskip

For any vertex $v$, consider any ordering with $v$ as first vertex, followed by its neighbours and then the remaining vertices: since $s$ edges are $A$-good then the copy of $A$ guaranteed by the earliest of these edges is contained in the neighbourhood of $v$ and is disjoint from the other $A$-good edges, so $v$ has at least $|A|+s-1=k-2$ neighbours.  It follows that $G$ is $(k-2)$-regular.

\medskip
{\em (3) Every component of $G$ is a complete graph.}
\smallskip

Suppose not. We will show for a contradiction that $G$ contains a copy of $T$, by constructing a suitable embedding $\phi:V(T)\to V(G)$. 

Since $G$ has a non-complete component it contains vertices $x$, $y$ and $z$ such that $xy$, $yz\in E(G)$ but $xz\not\in E(G)$. Let $v_1v_2\cdots$ be a longest path in $T$, which has at least $4$ vertices, since $T$ is not a star. Set $\phi(v_2)=x$, $\phi(v_3)=y$ and $\phi(v_4)=z$. Now continue greedily step-by-step, each time embedding a vertex $u$ of $T$ with a (necessarily unique) already embedded neighbour $v$. At each step, the only constraint is that $\phi(u)$ should be an unused neighbour of $\phi(v)$. Since $\phi(v)$ has $k-2$ neighbours this is always possible, unless $u$ is the last vertex to be embedded, and all vertices other than $u$, $v$ are mapped to neighbours of $\phi(v)$. To avoid this, we simply embed $v_1$ last (which is possible since it's a leaf), noting that $\phi(v_4)=z$ is not a neighbour of $\phi(v_2)=x$.
\end{proof}

\section{Extremal digraphs}\label{sec:extremaldigraphs}

\begin{proof}[Proof of Theorem~\ref{th:extremal2}]

Now suppose that $D$ is a digraph with $n$ vertices and $(k-2)n$ edges, and $\T$ is an antidirected tree with $k$ vertices.  If $D$ does not contain a copy of $\T$ then there are two cases:
\begin{itemize}
    \item If $\T$ is a star then the antidirected condition implies that the centre of the star is either a source or a sink.  In the first case, any $(k-2)$-out-regular digraph is extremal; and in the second, any $(k-2)$-in-regular digraph is extremal.  It is clear that these are the only extremal digraphs.
    \item If $\T$ is not a star, then any vertex-disjoint union of copies of complete digraphs on $k-1$ vertices, with all edges directed in both directions, is extremal.  We will show that every extremal digraph has this form. 
\end{itemize}

In the graph case, the key step was to show that $G$ is regular. In the digraph case, we need biregularity. We start by showing that this is sufficient.

\medskip
{\em (1) It suffices to show that $D$ is both in- and out-regular. }
\smallskip

Suppose that $D$ is bi-regular, so every vertex has in-degree and out-degree equal to $k-2$. Let $v_1v_2v_3v_4$ be a path in $\T$ with $v_1$ a leaf, and without loss of generality suppose the edges are directed $v_1\leftarrow v_2\to v_3\leftarrow v_4$. Suppose first that $D$ contains a vertex $y$ whose in-neighbourhood is not bi-complete, so $y$ has in-neighbours $x$ and $z$ with $x\not\to z$. Set $\phi(v_2)=x$, $\phi(v_3)=y$ and $\phi(v_4)=z$, and embed greedily as before. When embedding $u$ with a unique already-embedded neighbour $v$, we need either an unused in- or unused out-neighbour of $\phi(v)$. The only possible problem is at the last step, but we take $v_1$ last, and since $\phi(v_4)$ is not an out-neighbour of $\phi(v_2)$, there is at least one unused out-neighbour of $\phi(v_2)$ to map $v_1$ to.

Thus we may assume that every in-neighbourhood is bi-complete. But this easily implies that each component of $D$ is bi-complete. Let $S$ be the in-neighbourhood of some vertex $v$, so $|S|=k-2$. Then every vertex in $S$ has in- and out- degree $k-3$ within $S$, so exactly one in-edge from outside $S$. If all such edges come from $v$, then the component containing $v$ is bicomplete. Otherwise, there is an edge $x\to s$ for some $s\in S$ and $x\notin S\cup \{v\}$. There is at least one other vertex $t\in S$ (since $k\ge 4$ since $\T$ is not a star), and now by bi-completeness of the in-neighbourhood of $s$, the edge $t\to x$ is present. But then the out-degree of $t$ is at least $k-1$, a contradiction.

\bigskip

Label the vertices of $\T$ as in Fig \ref{f3}: without loss of generality, we assume that $x$ is a source, with $s\ge1$ adjacent leaves, and that $\A$ is an oriented tree with at least two vertices.  Note that the edge $xy$ is directed from $x$ to $y$, and that the leaves adjacent to $x$ are sinks (see Fig \ref{f4}). We now proceed in several steps.

\begin{figure}[ht]
\centering
\begin{tikzpicture}[
  line cap=round,
  line join=round,
  every path/.style={draw=black, line width=0.9pt},
  mid arrow/.style={
    postaction={decorate},
    decoration={
      markings,
      mark=at position 0.5 with {\arrow{>}}
    }
  }
]
  % The set S
  \coordinate (s1) at (0,0.65);
  \coordinate (s2) at (0,0);
  \coordinate (s3) at (0,-0.65);
  \coordinate (x)  at (2.15,0);
  \coordinate (y)  at (4.20,0);

  % Directed edges out of x, with arrows at their midpoints
  \draw[mid arrow] (x) -- (s1);
  \draw[mid arrow] (x) -- (s2);
  \draw[mid arrow] (x) -- (s3);
  \draw[mid arrow] (x) -- (y);

  % Copy of A rooted at y
  \coordinate (a1) at (6.10,1.05);
  \coordinate (a2) at (6.10,-1.05);

  \draw (y) -- (a1);
  \draw (y) -- (a2);
  \draw (a1) -- (a2);

  % Vertices
  \fill (s1) circle (1.8pt);
  \fill (s2) circle (1.8pt);
  \fill (s3) circle (1.8pt);
  \fill (x)  circle (2.2pt);
  \fill (y)  circle (2.2pt);

  % Labels
  \node[above=3pt] at (x) {$x$};
  \node[above=3pt] at (y) {$y$};
  \node at (5.35,0.05) {$\A$};

  % Brace and label S
  \draw[
    decorate,
    decoration={brace,amplitude=6pt}
  ] (-0.25,-0.78) -- (-0.25,0.78);
  \node[left=10pt] at (-0.25,0) {$S$};
\end{tikzpicture}
\caption{\label{f4} The tree $\T$}
\end{figure}

\medskip
{\em (2) Every ordering has exactly $s$ edges that are $\A$-good.}
\smallskip

Consider first the subtree $\A$ rooted at $y$, noting that $y$ is a sink.  As in the graph case, the expected number of $\A$-jumping edges is at least $s$, and so the expected number of $\A$-good edges is at least $s$; if there are at least $s+1$ $\A$-good edges then we obtain a (correctly oriented) copy of $\T$.  It follows that every ordering has exactly $s$ $\A$-good edges.  

\medskip
{\em (3) $D$ is $(k-2)$-out-regular. }
\smallskip

For any vertex $v$, consider any ordering with $v$ as first vertex, followed by its out-neighbours and then the remaining vertices: since $s$ edges are $\A$-good the copy of $\A$ guaranteed by the earliest of these edges is contained in the out-neighbourhood $\Gamma^+(v)$ of $v$ and is disjoint from the other $\A$-good edges, so $v$ has at least $|\A|+s-1=k-2$ out-neighbours.  Since this holds for every vertex, it follows that $D$ is $(k-2)$-out-regular.

\medskip
{\em (4) We may suppose that $\A$ contains vertices $a, b $ such that $a$ is a source and $b$ is a leaf of $\T$ adjacent to $a$.}
\smallskip

As in the undirected case, consider the tree $\T'$ obtained from $\T$ by deleting all its leaves.  Then $\T'$ has at least two leaves, one of which is $x$; let $a$ be another leaf.  If $a$ is a sink, then using $a$ in place of $x$, we get a decomposition of $\T$ as in Fig \ref{f4}, except with a sink at $a$ in place of the source at $x$.  Arguing as above, we get that $D$ is $(k-2)$-in-regular; by (1) and (3), this is enough.  Otherwise, $a$ is a source. Since $a$ is a vertex of $\T'$, there is some leaf $b$ of $\T$ adjacent to $a$, and $a$, $b$ both belong to $\A$.

\medskip
{\em (5) For every vertex $v$, every set $W$ of $|\A|=k-s-1$ out-neighbours of $v$, and every $z$ in $W$, there is a copy of $\A$ rooted at $z$ and with vertex set $W$.}
\smallskip

As in (3), consider an ordering with first vertex $v$, followed by its out-neighbours $w_1,\dots,w_{k-2}$ and then the remaining vertices.  If $vw_i$ is $\A$-good then $i\ge|\A|=k-s-1$ (as we need to fit in a copy of $\A$); and since there are exactly $s$ $\A$-good edges, it follows that $vw_i$ is $\A$-good for all $i\ge k-s-1$.  In particular, there is a copy of $\A$ rooted at $w_{k-s-1}$ and with vertex set $\{w_1,\dots,w_{k-s-1}\}$.  Since we can take the $w_i$ in any order, the claim follows.

\medskip
{\em (6) Suppose that $u,v$ have a common out-neighbour $z$, and $u$ and $v$ are not bi-adjacent. Then $u$ and $v$ are non-adjacent, and $\Gamma^+(u)=\Gamma^+(v)$.}

We may suppose without loss of generality that $u\not\to v$. By (5) there is a copy $\A_0$ of $\A$ contained in $\Gamma^+(u)$ and rooted at $z$. By assumption, $\A_0$ does not contain $v$. If $\Gamma^+(v)\ne\Gamma^+(u)$ then by out-regularity there is some $w\in \Gamma^+(v)\setminus \Gamma^+(u)$ (we may have $w=u$, but this is no problem). Then adding $v$ and $w$ to $\A_0$ gives a copy of $\T$, a contradiction. Thus $\Gamma^+(u)=\Gamma^+(v)$, and in particular they are non-adjacent.

\medskip
{\em (7) For every vertex $u$ there is $v\in \Gamma^+(u)$ such that $u\leftrightarrow v$ and their out-neighbourhoods are equal (apart from each other).} 
\smallskip

By (5) there is a copy $\A_0$ of $\A$ in $\Gamma^+(u)$. Now by (4), $\A$ contains a source $a$ and a leaf $b$ of $\T$ adjacent to $a$ (so $b$ is not the root of $\A$). Let $v$, $w$ be the images of $a$ and $b$ in $\A_0$. Suppose $v$ has an out-neighbour $w'$ outside $\Gamma^+(u)\cup\{u\}$. Then we may replace $w$ by $w'$ to obtain a new copy of $\A$, and then add $u$ and $w$ to make a copy of $\T$. Hence $\Gamma^+(v)\subset \Gamma^+(u)\cup\{u\}\setminus\{v\}$, a set of size $k-2$. Thus $\Gamma^+(v)$ is equal to this set.

\medskip
{\em (8) $D$ is in-regular. }
\smallskip

If not, then since the average in-degree is $k-2$, there is some vertex $v$ whose in-neighbourhood $I$ has $|I|\ge k-1$. If $I$ is an independent set, then by (6) all vertices in $I$ have the same set $J$ of out-neighbours which is necessarily disjoint from $I$. We thus have a complete bipartite graph with both vertex classes of size at least $k-2$ and all edges directed from the first class to the second class.  As $\T$ is not a star, this embeds $\T$. We may thus suppose that $I$ is not independent.

By (6) each pair in $I$ is either non-adjacent or bi-adjacent. Moreover, if $u,v$, and $v,w$ are non-adjacent, then (applying (6) to $u$, $v$, which shows that $w\notin \Gamma^+(u)$), so are $u,w$. Hence the subgraph induced by $I$ is complete multipartite with all edges bidirectional. Since $I$ is not independent, there are at least two vertex classes.

Now by (7), every vertex is in a bi-directed edge, so there is $u\in I$ with $v\to u$. Suppose $w$ is in a different class within $I$, and that $v\not\to w$. Since $u\leftrightarrow w$, $v$ and $w$ have a common out-neighbour $u$, so we obtain a contradiction to (6). Hence $v$ is joined to every vertex not in the same class as $u$. But applying this once more with $w$ in place of $u$, $v$ is joined to every vertex in $I$, and so has out-degree at least $k-1$, a contradiction.

\bigskip

We have shown that $D$ is both in- and out-regular, and so we are done by (1).
\end{proof}

\noindent{{\bf AI declaration:} ChatGPT was used to help draw pictures.}


\begin{thebibliography}{9}

\bibitem{directed}
L. Addario-Berry, F. Havet, C. Linhares Sales, B. Reed, and S. Thomass\'e, Oriented trees in digraphs, {\em Discrete Mathematics} {\bf 313} (2013), 967--974

\bibitem{BP-SS} 
 G. Besomi, M. Pavez-Sign{\'e} and M. Stein,
 On the {E}rd{\H{o}}s--{S}{\'o}s conjecture for trees with bounded degree,
 \emph{Combin. Probab. Comput.} {\bf 30} (2021), 741--761.

 \bibitem{bloom}
T. Bloom, Proof exposition of Erd\H os problem 548,\\ \texttt{https://www.erdosproblems.com/forum/thread/548\#proof-exposition-9}  (2026)

 \bibitem{BD}
 S. Brandt and E. Dobson,
 The Erd\H{o}s-–S\'os conjecture for graphs of girth 5,
 \emph{Discrete Math.} {\bf 150} (1996), 411--414.

\bibitem{erdossos} P. Erd\H os, Extremal problems in graph theory, {\em in} Theory of Graphs and Its Applications, Proc. Sympos. Smolenice, pages 29–36, 1964.

\bibitem{chatgpt}
GPT-6 Astra, A counting proof for Erd\H os problem 548,\\ \texttt{https://www.erdosproblems.com/static/548copy.pdf} (2026)

\bibitem{lean} Lean 4 Erd\H{o}s problem \#548 ({E}rd{\H{o}}s--{S}ós conjecture), Palomar submission repository,
\texttt{https://github.com/tadamcz/erdos548} (2026)

 \bibitem{McLennan}
 A. McLennan,
 The Erd\H os--S\'os conjecture for trees of diameter four,
 \emph{J. Graph Theory} {\bf 49} (2005), 291--301.

\bibitem{rohzon}
 V. Rozhoň, 
 A Local Approach to the {E}rd{\H{o}}s--{S}ós Conjecture,
 \emph{SIAM J. Disc. Math.} {\bf 33} (2019), 643--664.

\bibitem{pokrovskiy}
 A. Pokrovskiy, 
 Hyperstability in the Erd{\H{o}}s--S{\'o}s conjecture,
 Preprint, arXiv:2409.15191.

 \bibitem{maya}
G. Santos, M. Stein and E. Williams, Are trees really just butterflies in disguise?,
\texttt{https://arxiv.org/abs/2609.09142}

\bibitem{Stein}
 M. Stein,
 Tree containment and degree conditions,
  in Discrete Mathematics and Applications (A. M. Raigorodskii and M. Th. Rassias, eds.), Springer Optimization and Its Applications, vol. 165, Springer, Cham, 2020, 459--486. doi:\text{10.1007/978-3-030-55857-4\_19} 

\bibitem{ST-N25}
 M. Stein and A. Trujillo-Negrete,
 Antidirected Trees in Dense Digraphs,
 \emph{SIAM J. Disc. Math.} {\bf 39} (2025), 698--727.

\bibitem{SZ-G24} 
 M. Stein and C. Zárate-Guerén,
 Antidirected subgraphs of oriented graphs,
 \emph{Combin. Probab. Comput.} {\bf 33} (2024), 446--466.
 
\end{thebibliography}
\end{document}